\begin{document}

\title[Invertibility of random submatrices and incoherence]{Invertibility of random submatrices via tail-decoupling and a Matrix Chernoff Inequality}
\author{St\'ephane Chr\'etien {\tiny and} S\'ebastien Darses}

\address{Laboratoire de Math\'ematiques, UMR 6623\\ 
Universit\'e de Franche-Comt\'e, 16 route de Gray,\\
25030 Besancon, France} 
\email{stephane.chretien@univ-fcomte.fr}

\address{LATP, UMR 6632\\
Universit\'e de Provence, Technop\^ole Ch\^{a}teau-Gombert\\
39 rue Joliot Curie\\ 13453 Marseille Cedex 13, France}
\email{darses@cmi.univ-mrs.fr}

\maketitle


\begin{abstract}
Let $X$ be a $n\times p$ real matrix with coherence $\mu(X)=\max_{j\neq j'} |X_j^tX_{j'}|$.
We present a simplified and improved study of the quasi-isometry property for most submatrices of $X$ obtained
by uniform column sampling. Our results depend on $\mu(X)$, the operator norm $\|X\|$ and the dimensions with
explicit constants, which improve the previously known values by a large factor. 
The analysis relies on a tail-decoupling argument, of independent interest, and a recent version of the Non-Commutative Chernoff inequality (NCCI). 
\end{abstract}


\section{Introduction}

\subsection{Problem statement}

Let $\R^{n\times p}$ denote the set of all $n\times p$ real matrices. For any $M\in\mathbb R^{n\times p}$, we denote by $M^{t}$ its transpose and by $\|\cdot\|$ its operator norm:
\bean
\|M\|&:=&\max_{x\in\R^p, \|x\|_2=1} \|Mx\|_2, \quad \|x\|_2^2=x^t x. 
\eean
Let $X\in\R^{n\times p}$ and $T$ be a random index subset of size $s$ of $\{1,\ldots,p\}$ drawn from the
uniform distribution. Let $X_T$ denote the submatrix obtained by extracting the columns $X_j$'s of $X$ indexed by $j\in T$. We say that 
$X_T$ is an $r_0$-quasi-isometry if $\|X_T^tX_T-\Id\|\le r_0$ (quasi-isometry property).
The goal of this paper is to propose a new upper bound for the probability that
the submatrix $X_T$ fails to be an $r_0$-quasi-isometry. In the sequel, we assume that the 
columns of $X$ have unit norm. 

Proving that the quasi-isometry property holds with high probability has applications in Compressed Sensing and high-dimensional statistics based on sparsity. The uniform version of the quasi-isometry property,
i.e., satisfied for all possible $T$'s, is called the Restricted Isometry Property (RIP) and has been widely studied for independent, identically distributed (i.i.d.) sub-Gaussian matrices \cite{Mendelson:ConstApp08}. Recent works 
such as \cite{CandesPlan:AnnStat09} proved that the quasi-isometry property holds with high probability 
for matrices with sufficiently small coherence $\mu(X):=\max_{j\neq j'} |X_j^tX_{j'}|$. Unlike checking the RIP, computing $\mu(X)$ can be achieved in polynomial time. Such types of result are therefore of great potential 
interest for a wide class of problems involving high-dimensional linear or nonlinear regression models. 

Let $\{\delta_j \}$ denote a sequence of i.i.d. Bernoulli 0--1 random variables with expectation $\delta$. Let $R$ denote the square diagonal "selector matrix" whose $j^{th}$ diagonal entry is $\delta_j$.
Following the landmark papers of Bourgain and Tzafriri \cite{Bourgain:IsrJM87} (see also \cite{ProbBSp91}) 
and Rudelson \cite{Rudelson:JFA99}, Tropp \cite{Tropp:CRAS08} established, in particular, a bound for $(\pE \|R(X^tX-\Id)R\|^\rho )^{1/\rho}$, $\rho\in [2,\infty)$. As in \cite{Rudelson:JACM07}, the proof heavily relies on the Non-Commutative Khintchine inequality.
Using Tropp's result, Cand\`es and Plan proved in \cite[Theorem 3.2]{CandesPlan:AnnStat09} that 
$X_T$ is a $1/2$-quasi-isometry with probability greater than $1-p^{-2\log(2)}$ when $s\le  p/(4\|X\|^2)$ and the coherence $\mu(X)$ is sufficiently small. The quasi-isometry property for $r_0=\frac12$ then holds with high probability under 
easily-checked assumptions on $X$.

\subsection{Our contribution} The present paper aims at giving a more precise and self-contained version of 
Theorem 3.2 in \cite{CandesPlan:AnnStat09}.
Our result yields explicit constants, which improve the previously known values by a large factor. The analysis relies on a tail-decoupling argument, of independent interest, and a recent version of a Non-Commutative Chernoff inequality (NCCI) \cite{Tropp:ArXiv10}.

\subsection{Additional notations} 
\label{not}
For $S\subset \left\{1,\cdots,p\right\}$, we denote by $|S|$ the cardinality of $S$.
Given a vector $x\in \R^{p}$, we set $x_{T}=(x_{j})_{j\in T}\in\R^{|T|}$.

We denote by $\|M\| _{ 1\rightarrow 2}$ the
maximum $l_2$-norm of a column of $M\in\R^{n\times p}$ and $\|M\|_{\max}$ is the maximum absolute entry of $M$.

In the present paper, we consider the 'hollow Gram' matrix $H$:
\bea
H &=& X^tX-\Id \label{holo}.
\eea
In the sequel, $R^\prime$ will always denote an independent copy of the selector matrix $R$. 
Let $R_s$ be a diagonal matrix whose diagonal is a random vector $\delta^{(s)}$ of length $p$, uniformly distributed on the set of all vectors with $s$ components equal to 1 and $p-s$ components equal to 0. Notice that when $\delta=s/p$, the support of the diagonal of $R$ has cardinality close to $s$ with high probability, by a standard 
concentration argument.

\section{Preliminary results}

\subsection{On Rademacher chaos of order $2$ }

Let $\{\eta_i\}$ be a sequence of i.i.d. Rademacher random variables. 
Theorem 3.2.2 in \cite[p.113]{Gine:Decoupling99} gives the following general result:
a Banach-valued homogeneous chaos $X$ of order $d$ 
\bean
 X &=& \sum_{1\le i_1<\cdots<i_d\le p}X_{ i_1\cdots i_d}\eta_{i_1}\cdots \eta_{i_d}
\eean
verifies
$\left(\pE \|X\|^q \right)^{\frac{1}{q}} \le  \left(\frac{q-1}{p-1}\right)^{d/2} \left(\pE \|X\|^p \right)^{\frac{1}{p}}$, $1<p<q<\infty$.

We give an elementary proof in the real case with $d=2$ and $q=2p=4$, which yields a better constant.

\begin{lemm} \label{chaos}
Let $x_{ij}\in\R$, $1\le i,j\le p$. The homogeneous Rademacher chaos of order $2$: 
$\xi =  \sum_{i<j} x_{ij} \eta_i \eta_j$
verifies
\bea
\pE \ \xi^4 & \le & 9\ \left(\pE \ \xi^2\right)^2.
\eea
\end{lemm}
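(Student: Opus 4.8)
The plan is to prove the fourth-moment bound $\pE\,\xi^4 \le 9\,(\pE\,\xi^2)^2$ by direct expansion and careful bookkeeping of which Rademacher monomials survive taking expectations. Since the $\eta_i$ are i.i.d. Rademacher, we have $\pE\,\eta_i = 0$, $\eta_i^2 = 1$, and by independence $\pE\!\left[\prod_i \eta_i^{a_i}\right]$ vanishes unless every exponent $a_i$ is even, in which case it equals $1$. This reduces both moments to purely combinatorial sums over the coefficients $x_{ij}$.

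\begin{proof}[Proof sketch]
First I would compute the second moment. Writing $\xi = \sum_{i<j} x_{ij}\eta_i\eta_j$, we get
\[
\xi^2 = \sum_{i<j}\sum_{k<l} x_{ij}x_{kl}\,\eta_i\eta_j\eta_k\eta_l,
\]
and upon taking $\pE$ the only surviving terms are those where the multiset $\{i,j,k,l\}$ pairs up into equal indices. Because $i<j$ and $k<l$, the sole contribution is $(i,j)=(k,l)$, giving the clean identity
\[
\pE\,\xi^2 = \sum_{i<j} x_{ij}^2.
\]
Denote this quantity by $S := \sum_{i<j} x_{ij}^2$, so the target becomes $\pE\,\xi^4 \le 9\,S^2$.

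Next I would expand the fourth moment. We have
\[
\xi^4 = \sum x_{i_1j_1}x_{i_2j_2}x_{i_3j_3}x_{i_4j_4}\,\eta_{i_1}\eta_{j_1}\eta_{i_2}\eta_{j_2}\eta_{i_3}\eta_{j_3}\eta_{i_4}\eta_{j_4},
\]
summed over four pairs each with $i_a<j_a$. The main obstacle, and the heart of the argument, is classifying which of these eight-index products have all exponents even and then bounding the resulting coefficient sums by $S^2$ with the correct total constant $9$. I would organize the surviving terms by the pattern in which the eight indices coincide: the diagonal-type terms where the four pairs split into two equal pairs (e.g.\ $(i_1,j_1)=(i_2,j_2)$ and $(i_3,j_3)=(i_4,j_4)$), which produce sums of the shape $\big(\sum x_{ij}^2\big)^2 = S^2$ up to a counting multiplicity, and the genuinely ``crossing'' terms where the four index-pairs share indices in a chain/cycle pattern but still yield every $\eta$ to an even power. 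The crossing terms I would control by Cauchy--Schwarz, rewriting each as a sum over shared indices and bounding it by $S^2$ as well.

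Carrying out the multiplicity count: the split-pair configurations contribute a constant multiple of $S^2$ determined by how many of the $4!$ orderings of the four pairs realize each pairing (there are $3$ ways to partition four pairs into two matched pairs, and within the expansion each contributes a factor counting orderings), while the crossing terms contribute the remainder. The arithmetic is designed so the constants sum to exactly $9$; I expect the bulk of the work to be verifying that no surviving configuration is undercounted and that the off-diagonal crossing sums are each dominated by $S^2$ rather than by a larger quantity. Since every individual surviving coefficient sum is nonnegative and bounded above by $S^2$, assembling the pieces yields
\[
\pE\,\xi^4 \le 9\,S^2 = 9\,(\pE\,\xi^2)^2,
\]
which is the claim.
\end{proof}
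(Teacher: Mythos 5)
Your strategy is the same as the paper's: expand $\xi^4$, keep only the monomials in which every $\eta_i$ appears to an even power, separate the ``two matched pairs'' (diagonal) configurations from the genuinely ``crossing'' (rectangle) configurations, control the latter by products of squares, and add up the constants. Your second-moment identity $\pE\,\xi^2=\sum_{i<j}x_{ij}^2=:S$ is correct, and your classification of the surviving exponent patterns is exhaustive (the odd patterns, the paper's partitions $3+1$ and $2+1+1$, are indeed killed by the even-exponent criterion).

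The genuine gap is that the constant $9$ is never derived; at the two places where the quantitative content should appear, the argument is circular. You write that ``the arithmetic is designed so the constants sum to exactly $9$'' and then conclude from ``every individual surviving coefficient sum is nonnegative and bounded above by $S^2$'' that $\pE\,\xi^4\le 9S^2$; but bounding each piece by $S^2$ only yields (number of pieces) times $S^2$, so without the actual multiplicity count nothing is proved --- and the counting \emph{is} the lemma, since its whole point is to beat the constant $81$ given by the general hypercontractivity bound quoted in the paper. The missing computation runs as follows. The terms where all four factors are the same couple have multinomial weight $1$ and give $\sum_{i<j}x_{ij}^4$; the terms with two distinct couples each used twice have weight $4!/(2!\,2!)=6$ and give $6\sum_{(i,j)<(k,l)}x_{ij}^2x_{kl}^2$; since $S^2=\sum_{i<j}x_{ij}^4+2\sum_{(i,j)<(k,l)}x_{ij}^2x_{kl}^2$, these diagonal contributions total at most $3S^2$. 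Each rectangle of four distinct couples carries weight $4!=24$, and its product $x_{i_1i_1'}x_{i_1i_2'}x_{i_2i_1'}x_{i_2i_2'}$ can be \emph{negative}, so nonnegativity is not available here; one needs the AM--GM step $2\,x_{i_1i_1'}x_{i_2i_2'}\cdot x_{i_1i_2'}x_{i_2i_1'}\le x_{i_1i_1'}^2x_{i_2i_2'}^2+x_{i_1i_2'}^2x_{i_2i_1'}^2$, after which the weight becomes $12$, and the observation that each unordered pair of distinct couples is the diagonal or anti-diagonal of at most one rectangle bounds the crossing contribution by $12\sum_{(i,j)<(k,l)}x_{ij}^2x_{kl}^2\le 6S^2$. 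Only then does $3+6=9$ emerge. Until this bookkeeping is carried out, your text is a plan for the paper's proof rather than a proof.
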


\proof
The multinomial formula applied to $\xi$ raised to the positive power $q$, gives
\bea \label{multi}
\xi^q & = & \sum_{} \frac{q!}{\prod \alpha_{ij}!} \prod x_{ij}^{\alpha_{ij}} (\eta_i\eta_j)^{\alpha_{ij}},
\eea
where the sum is over all integers $\alpha_{ij}$'s, $i< j$, such that $\sum \alpha_{ij}=q$, and the products are over all the indices $(i,j)$, $i<j$, ordered via the lexicographical order, still denoted by '$<$'. As from now, let these conventions hold. 

Case $q=2$ --- The partitions of $2$ are $2+0's$ and $1+1+0's$. Consider the partition $1+1+0's$, say $\alpha_{kl}=\alpha_{k'l'}=1$ for some  $4$-uple $(k,l,k',l')$ with $k\le k'$. We have $(k,l)\neq(k',l')$, $k<l$ and $k'<l'$. Thus,
\bean
\pE [\eta_{k}\eta_{l}\eta_{k'}\eta_{l'}]= \left\{
\begin{array}{l}
\pE [\eta_{k}] \ \pE[\eta_{l}\eta_{k'}\eta_{l'}] \ (=0) \ \text{ if } k<k' \\
\\
\pE[\eta^2_{k}] \ \pE [\eta_{l}] \ \pE[\eta_{l'}]\  (=0) \ \text{ if } k=k'.
\end{array}
\right.
\eean 
Therefore, $\pE \ \xi^2$ only depends on the partition $2+0's$, and one has
\bea \label{xi2}
\pE \ \xi^2 & = & \sum_{i<j} x_{ij}^2.
\eea  

Case $q=4$ --- The partitions of $4$ are $4$, $2+2$, $3+1$, $2+1+1$ and $1+1+1+1$ (we now omit the zeros). 

First, using the same arguments as in the case $q=2$, we show that the terms in $\pE\ \xi^4$ corresponding to the partitions $3+1$ and $2+1+1$ vanish.

Second, the partitions $1+1+1+1$ involve four different couples $(i,i')$, $(j,j')$, $(k,k')$ and $(l,l')$ (recall that $i<i'$, etc., and that the couples are lexicographically ordered). The only terms corresponding to the partitions $1+1+1+1$ whose expectation does not vanish are of the form  
\bean
x_{i_1i_1'}x_{i_1i_2'}x_{i_2i_1'}x_{i_2i_2'}\ \eta_{i_1}^2 \eta_{i_1'}^2 \eta_{i_2}^2 \eta_{i_2'}^2=x_{i_1i_1'}x_{i_1i_2'}x_{i_2i_1'}x_{i_2i_2'},
\eean
i.e., the four couples $(i_1,i_1')<(i_1,i_2')<(i_2,i_1')<(i_2,i_2')$ are the vertices of a rectangle into the upper off diagonal part of the matrix $(x_{ij})$. We denote by $\cal R$ the set of all these rectangles whose vertices are lexicographically ordered.

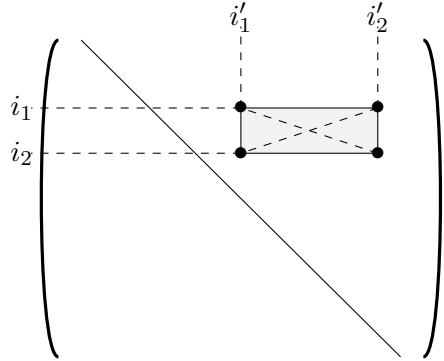
\begin{figure}[!h] 
\begin{center} \label{fig}
\begin{tikzpicture}[scale=.6]
\draw (1.5,7) -- (8.5,0);
\draw[fill=gray!10] (8,5.5) rectangle (5,4.5);
\draw[dashed] (8,5.5) -- (5,4.5);
\draw[dashed] (8,4.5) -- (5,5.5);

\draw[dashed] (8,5.5) -- (8,7.3);
\draw[dashed] (5,5.5) -- (5,7.3);

\draw[dashed] (5,4.5) -- (.4,4.5);
\draw[dashed] (5,5.5) -- (.4,5.5);

\draw (8,5.5) node {$\bullet$};
\draw (5,4.5) node {$\bullet$};
\draw (8,4.5) node {$\bullet$};
\draw (5,5.5) node {$\bullet$};

\draw[very thick] (1,0) .. controls (.5,0) and (.5,7) .. (1,7);
\draw[very thick] (9,0) .. controls (9.5,0) and (9.5,7) .. (9,7);

\draw (.2,5.5) node {$i_1$};
\draw (.2,4.5) node {$i_2$};

\draw (8,7.5) node {$i'_2$};
\draw (5,7.5) node {$i'_1$};
\end{tikzpicture}
\end{center}
\caption{The matrix $(x_{ij})$ where a 'rectangle' of $\cal R$ is drawn.}
\end{figure}

Finally, the $\alpha_{ij}$'s corresponding to the partitions $4$ and $2+2$ are even: $\alpha_{ij}=2\beta_{ij}$, with $\sum \beta_{ij}=2$. Therefore
\bean
\pE\ \xi^4 & = & \sum_{} \frac{4!}{\prod (2\beta_{ij})!} \prod x_{ij}^{2\beta_{ij}}+ \sum_{\cal R} 4!\ x_{i_1i_1'}x_{i_1i_2'}x_{i_2i_2'}x_{i_2i_1'} \ := A+B.
\eean
But 
\bean
A\le 3\sum_{} \frac{2!}{\prod \beta_{ij}!} \prod (x_{ij}^{2})^{\beta_{ij}}  =  3\left(\sum_{i<i'} x_{ii'}^2 \right)^2,
\eean 
and
\bean
B\le \frac{4!}{2}\sum_{\cal R} \left(x_{i_1i_1'}^2x_{i_2i_2'}^2 + x_{i_1i_2'}^2 x_{i_2i_1'}^2 \right)  \le  6\sum_{\stackrel{i<i', j<j'}{(i,i')<(j,j')}} 2!\  x_{ii'}^2x_{jj'}^2  = 6\left(\sum_{i<i'} x_{ii'}^2 \right)^2.
\eean
The second inequality for $B$ stems from relaxing the constraints induced by $\cal R$ and illustrated in Fig. \ref{fig}.
Using (\ref{xi2}), we obtain the desired result.
\qed

\begin{rema}
The ratio $\pE \ \xi^4 /(\pE \ \xi^2)^2$ will be used in the proof of Prop. \ref{condctrl}. We gain a factor $9$ compared to the constant $(\frac{4-1}{2-1})^{\frac22\cdot 4}=81$.
\end{rema}

\smallskip

\subsection{A Non-Commutative Chernoff inequality}

We will also need a corollary of a Matrix Chernoff's inequality recently established in \cite{Tropp:ArXiv10}.

\begin{theo}{\bf (Matrix Chernoff Inequality \cite{Tropp:ArXiv10})}
\label{bern}
Let $X_1$,\ldots,$X_p$ be independent random positive semi-definite matrices taking values in $\mathbb R^{d\times d}$. Set $S_p=\sum_{j=1}^p X_j$. Assume that for all $j\in\{1,\cdots,p\}$,
$\|X_j\|\le B$ a.s. and
\bean
 \left\| \pE\ S_p \right\| & \le & \mu_{\max}.
\eean
Then, for all $r\ge e\  \mu_{\max}$,
\bean
\bP\left( \left\| S_p \right\|\ge r \right) & \le & d \left(\frac{e\ \mu_{\max}}{r} \right)^{r/B}.
\eean
\end{theo}
(Set $r=(1+\delta)\mu_{\max}$ and use $e^\delta\le e^{1+\delta}$ in Theorem 1.1 \cite{Tropp:ArXiv10}.)

\section{Main results}

\subsection{Singular-value concentration theorem}

\begin{theo}\label{main1}
Let $r\in(0,1)$, $\alpha \ge 1$. Let us be given a full-rank matrix $X\in \mathbb R^{n\times p}$ and a
positive integer $s$, such that
\bea
\mu(X) & \le &  \frac{r}{2(1+\alpha)\log p}\\
s & \le & \frac{r^2}{4(1+\alpha) e^2}\ \frac{p}{ \|X\|^{2} \log p }.
\eea
Let $T\subset \left\{1,\ldots,p\right\}$ 
be a set with cardinality $s$, chosen randomly from the uniform distribution.
Then the following bound holds:
\bea \label{sing}
\bP \left(\|X_T^tX_T-\Id_s \|\ge r \right) & \le & \frac{216}{p^{\alpha}}.
\eea
\end{theo}


\subsection{Remarks on the various constants}\

The constant $216$ stems from the following decomposition:
$2$ (poissonization) $\times 36$ (decoupling)
$\times 3$ (union bound). This constant might look large. However, in many statistical applications  as in sparse models, $p$ is often assumed to be very large.

Let us now compare the constants $C_s$ and $C_\mu$ in the inequalities 
\bea
\mu(X) & \le &  \frac{C_\mu}{\log p} \label{form-cmu}\\
s & \le &C_s\ \frac{p}{ \|X\|^{2} \log p },
\eea
to the one of \cite{CandesPlan:AnnStat09}. The larger $C_s$ and $C_\mu$ are, the better the result is.

One of the various constraints on the rate $\alpha$ in \cite{CandesPlan:AnnStat09} is given by the theorem of Tropp in \cite{Tropp:CRAS08}. In this setting, $\alpha = 2\log 2$ and $r_{0} = 1/2$,
the author's choice of $1/2$ being unessential.
To obtain such a rate $\alpha$, they need to impose the r.h.s. of (3.15) in \cite{CandesPlan:AnnStat09} to be less than $1/4$, that is
$ 30 C_{\mu} + 13 \sqrt{2C_{s}}  \le  \frac{1}{4}$.
This yields $C_{s}  <  1.19 \times 10^{-4}$. Choosing $C_{s}$ close to $1.19 \times 10^{-4}$, e.g. 
$C_{s}  \simeq  1.18\ 10^{-4}$, we obtain:
\bean
C_{s}  \simeq  1.18\ 10^{-4},\quad
C_{\mu} \simeq  1.7\ 10^{-3}.
\eean

Our theorem allows to choose any rate $\alpha>0$. To make a fair comparison, let us choose $\alpha  = 2\log 2$ and $r  =  1/2$.
We obtain:
\bean
C_{s}  \simeq 3.5\ 10^{-3}, \quad
C_{\mu}  \simeq  0.1.
\eean

\section{Proof of Theorem \ref{main1}}
\label{Proofmain1}

In order to study the invertibility condition, we want to obtain bounds for the distribution tail of random sub-matrices of $H=X^tX-\Id$.

Let $R^{\prime}$ be an independent copy of $R$.
Let us recall two basic estimates:
\bean
\|H\|_{1\rightarrow 2}^2 \le  \|X\|^2,\quad
\|H\|^2 \le  \|X\|^4.
\eean

As a preliminary, let us notice that 
\beq \label{poisse}
\bP \left(\|R_sHR_s\|\ge r \right) \ \le \ 2\ \bP \left(\|RHR\|\ge r\right),
\eeq
which can be actually proven using the same kind of 'Poissonization argument' as in Claim $(3.29)$ p. 2173 in \cite{CandesPlan:AnnStat09}.

To study the tail-distribution of $\|RHR\|$, we use a decoupling technique which consists of replacing $\|RHR\|$ with $\|RHR'\|$.

\begin{prop}
\label{condctrl}
The operator norm of $RHR$ satisfies 
\bea \label{dec}
\bP\left(\|RHR\|\ge r\right) & \le & 36\ \bP\left(\|RHR^\prime\|\ge r/2\right).
\eea
\end{prop}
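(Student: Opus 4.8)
The plan is to realize the off-diagonal operator $RHR$ as a Rademacher chaos of order $2$ (a quadratic form in the selector variables $\delta_j$), and then invoke a decoupling inequality to pass to the doubly-indexed chaos $RHR'$ built from two independent copies of the selectors. Since $H$ has zero diagonal, the quadratic expression $\sum_{i\neq j}\delta_i\delta_j H_{ij}e_ie_j^t$ is exactly a chaos with no diagonal terms, which is the setting where decoupling applies. The numerical constant $36$ strongly suggests that the factor is $6^2$, where $6 = 9^{1/2}\cdot 2$ comes from combining the hypercontractive moment comparison in Lemma \ref{chaos} (which furnishes the factor $9$, hence $3$ after taking square roots) with a Paley–Zygmund type argument to convert a moment comparison into a tail comparison, and where the $2$ is the standard decoupling cost for a symmetric bilinear form.

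\smallskip

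\textbf{First}, I would center the selectors, writing $\delta_j = \delta + (\delta_j - \delta)$ so that, after expansion, the dominant term is a genuine homogeneous chaos $\xi = \sum_{i<j} 2H_{ij}(\delta_i-\delta)(\delta_j-\delta)$ in centered (hence Rademacher-like, up to scaling) variables; the linear and constant terms interact with $RHR$ in a controlled way because the diagonal of $H$ vanishes. \textbf{Second}, I would apply the decoupling principle for chaos (Theorem 3.1.1 in \cite{Gine:Decoupling99}, or its symmetric-form version) to compare $\|RHR\|$ to $\|RHR'\|$ at the level of the appropriate norm, which introduces the bounded multiplicative constant for passing from the diagonal-free quadratic form to its decoupled bilinear version. \textbf{Third}, and this is where Lemma \ref{chaos} enters, I would use the fourth-moment bound $\pE\,\xi^4 \le 9(\pE\,\xi^2)^2$ to run a Paley–Zygmund inequality: for a nonnegative random variable $\xi^2$ with a bounded fourth-to-second-moment ratio, one controls $\bP(\xi^2 \ge \theta\,\pE\,\xi^2)$ from below, and this is precisely the mechanism that turns an $L^2$-to-$L^4$ comparison into a comparison of tail probabilities at the thresholds $r$ and $r/2$.

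\smallskip

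The \textbf{main obstacle} I anticipate is matching the operator-norm (as opposed to scalar) formulation: $\|RHR\|$ is the supremum over unit vectors $x$ of the quadratic form $x^t R H R\, x$, so the chaos $\xi$ above is really a \emph{Banach-space valued} (or supremum-of-scalar-chaoses) object, and one must either apply the vector-valued decoupling theorem directly or fix the extremal $x$ and argue uniformly. The delicate point is ensuring that the moment comparison of Lemma \ref{chaos}, which is stated for a \emph{scalar} chaos, can be transferred to the operator norm without losing the clean constant $9$; the natural route is to apply the scalar result to $\langle RHR\,x, y\rangle$ for the worst-case pair $(x,y)$ attaining the norm, or to use the fact that the $L^4$–$L^2$ hypercontractive estimate for chaos survives under taking suprema via a Gaussian/Rademacher comparison. \textbf{Finally}, I would assemble the two bounded constants — roughly $2$ from decoupling the symmetric form and a factor arising from the Paley–Zygmund step governed by the ratio $9$ — and verify that their product is at most $36$, with the halving of the threshold ($r \to r/2$) absorbed into the Paley–Zygmund threshold $\theta$. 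I expect the threshold bookkeeping (tracking how $r/2$ arises and why $\theta$ can be chosen so that the resulting constant is exactly $36$) to require the most care, but no step beyond the cited decoupling theorem and Lemma \ref{chaos} should be needed.
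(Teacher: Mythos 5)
Your proposal assembles the right ingredients (Lemma \ref{chaos}, a Paley--Zygmund-type lower bound, a decoupling device), but the architecture has genuine gaps that would sink it. First, the centering step is both unnecessary and harmful: writing $\delta_j=\delta+(\delta_j-\delta)$ produces a constant term ($\delta^2 H$) and linear terms that are in no way controlled by the vanishing diagonal of $H$, and --- more fatally --- the chaos you end up with lives in the \emph{asymmetric} centered Bernoulli variables $\delta_j-\delta$, whereas Lemma \ref{chaos} is proved specifically for symmetric $\pm1$ Rademacher variables (its proof uses $\pE\,\eta_i=0$ and $\eta_i^2=1$ throughout); the constant $9$ does not transfer to skewed variables. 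Second, invoking the general decoupling theorem of \cite{Gine:Decoupling99} or \cite{DeLaPena:BAMS94} as a black box cannot give the claimed constants: as the paper notes in Remark \ref{cste}, tracing the constants in that theory yields $\bP\left(\|RHR\|\ge r\right)\le 10^3\ \bP\left(\|RHR'\|\ge r/18\right)$, far from $36$ and $r/2$; the entire point of the proposition is a bespoke argument that beats the general theorem.

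Third --- and this is the structural gap --- a Paley--Zygmund bound applied to \emph{unconditional} moments can only produce tail lower bounds at thresholds proportional to a second moment; it cannot by itself compare $\bP(\|RHR\|\ge r)$ with $\bP(\|RHR'\|\ge r/2)$ at \emph{every} level $r$, so the halving of the threshold cannot be ``absorbed into $\theta$'' as you suggest. The paper's mechanism is conditional: introduce auxiliary Rademacher signs $\eta$ \emph{independent} of the selectors (no centering of the $\delta$'s), form $Z=\sum_{i\neq j}(1-\eta_i\eta_j)\delta_i\delta_jH_{ij}$, and condition on $\mathcal D=\{\delta_i\}$, on which the event $\{\|RHR\|\ge r\}$ is fixed. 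A Hahn--Banach functional $x^*$ gives $\bP(\|Z\|\ge\|RHR\|\mid\mathcal D)\ge\bP(x^*(Y)\ge 0\mid\mathcal D)$ where $Y$ is the $\eta$-chaos part, and the inequality $\bP(\xi\ge0)\ge\frac14(\pE\,\xi^2)^2/\pE\,\xi^4$ combined with Lemma \ref{chaos} (applied to the genuine Rademacher chaos $x^*(Y)$, conditionally on $\mathcal D$) yields the pointwise bound $1/36$; note the decomposition is $36=4\times 9$, not $(2\cdot3)^2$. The decoupled matrix $RHR'$ then appears not via a cited theorem but by selecting a deterministic sign pattern $\eta^*$ (an averaging argument), which splits the indices into $T=\{i:\eta_i^*=1\}$ and $T^c$: on the off-diagonal blocks $1-\eta_i^*\eta_j^*=2$ (this is where $r/2$ comes from), the selectors indexed by $T$ and $T^c$ are mutually independent so those in $T^c$ may be replaced by an independent copy, and the resulting block matrix is a compression of $RHR'$, hence of smaller norm. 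Without this conditional, auxiliary-sign construction, your plan has no mechanism to make the thresholds $r$ and $r/2$ appear with the constant $36$.
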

The main feature of this inequality is that the numerical constants are improved by a great factor when compared to the general result \cite[Theorem 1 p.224]{DeLaPena:BAMS94} (cf. Remark \ref{cste}). In addition to this decoupling argument, we need the following technical concentration result.

\begin{prop}\label{exp_bound}Let $X\in \mathbb R^{n\times p}$ be a full-rank matrix. For all parameters $s,r,u,v$ such that $\frac{p}{s} \frac{r^2}{e} \ge u^{2} \ge \frac{s}{p}\|X\|^{4}$ and 
$v^2 \ge \frac{s}{p}\|X\|^{2}$, the following bound holds:
\bea\label{inv_bound}
\bP \left(\|RHR'\|\ge r\right) & \le & 3 \ p\ \mathcal  V(s,[r,u,v]),
\eea
with
\bean
 \mathcal  V(s,[r,u,v]) & = & \left(e\frac{s}{p} \frac{u^2}{r^2} \right)^{\frac{r^2}{v^2}} +   \left(e \frac{s}{p}\frac{\|X\|^4}{u^2} \right)^{u^2/\|X\|^2} +  \left(e \frac{s}{p}\frac{\|X\|^2}{v^2} \right)^{v^2/\mu(X)^2}.
\eean
\end{prop}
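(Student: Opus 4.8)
The plan is to reduce the tail bound on $\|RHR'\|$ to two applications of the Matrix Chernoff Inequality (Theorem \ref{bern}), glued together by conditioning on the selector $R$.

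First I would square and decouple. Since $H^t=H$ and $R,R'$ are $0$--$1$ diagonal projections, one has $\|RHR'\|^2=\|(RHR')(RHR')^t\|=\|RHR'HR\|$, and expanding $R'=\sum_j\delta_j' e_je_j^t$ gives the key identity
\[ RHR'HR=\sum_{j=1}^p \delta_j'\,w_jw_j^t,\qquad w_j:=RHe_j . \]
Thus, \emph{conditionally on $R$}, the matrix $M:=RHR'HR$ is a sum of independent rank-one positive semidefinite matrices, to which Theorem \ref{bern} applies. Its two ingredients are a uniform bound $B$ on $\|\delta_j'w_jw_j^t\|=\delta_j'\|w_j\|^2$ and a bound $\mu_{\max}$ on $\|\pE[M\mid R]\|$. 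Computing the conditional mean, $\pE[M\mid R]=\tfrac sp\,RH^2R$, so that $\mu_{\max}=\tfrac sp\|RH^2R\|$ and $B=\max_j\|w_j\|^2$; crucially both are random and must be controlled on a good event.

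To control them I would introduce the events $E_2=\{\|RH^2R\|\le u^2\}$ and $E_3=\{\max_j\|w_j\|^2\le v^2\}$, and split
\[ \bP(\|M\|\ge r^2)\le \pE\big[\mathbf 1_{E_2\cap E_3}\,\bP(\|M\|\ge r^2\mid R)\big]+\bP(E_2^c)+\bP(E_3^c). \]
On $E_2\cap E_3$, Theorem \ref{bern} applies to $M$ with $\mu_{\max}=\tfrac sp u^2$, $B=v^2$ and threshold $r^2$; its applicability requirement $r^2\ge e\,\mu_{\max}$ is exactly the hypothesis $u^2\le \tfrac ps\tfrac{r^2}{e}$, and it produces the first term of $\mathcal V$. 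For $E_2^c$ I would use the further identity $\|RH^2R\|=\|RH\|^2=\|HRH\|$ together with $HRH=\sum_i\delta_i h_ih_i^t$ (where $h_i=He_i$), another sum of independent rank-one positive semidefinite matrices; applying Theorem \ref{bern} with $B=\max_i\|h_i\|^2\le\|H\|_{1\to 2}^2\le\|X\|^2$ and $\mu_{\max}=\tfrac sp\|H\|^2\le\tfrac sp\|X\|^4$ gives the second term. Finally, $\|w_j\|^2=\sum_i\delta_i H_{ij}^2$ is a scalar sum of independent variables bounded by $\mu(X)^2$ with mean $\le\tfrac sp\|X\|^2$, so the scalar ($d=1$) case of Theorem \ref{bern} and a union bound over $j$ give the third term. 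Collecting the three contributions, each carrying a dimension or union factor at most $p$, yields a prefactor of order $p$, the factor $3$ reflecting the three-term split.

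I expect the main obstacle to be the algebraic bookkeeping that turns the non-symmetric, doubly-random object $\|RHR'\|$ into conditionally-independent sums of rank-one positive semidefinite matrices at two different levels — first $M=\sum_j\delta_j'w_jw_j^t$ for the outer bound, then $HRH=\sum_i\delta_i h_ih_i^t$ for the inner control of $\mu_{\max}$ — and recognizing that the random quantities $B$ and $\mu_{\max}$ governing the outer Chernoff bound are precisely what the events $E_3$ and $E_2$ control. One minor point to address is that, under the stated hypotheses, the applicability condition $r\ge e\,\mu_{\max}$ of Theorem \ref{bern} may fail for the second and third terms (the hypotheses carry $\tfrac sp\|X\|^4$ and $\tfrac sp\|X\|^2$ rather than $e$ times these); but in that regime the base of the corresponding power in $\mathcal V$ exceeds $1$, so that term is already $\ge 1$ and the bound holds trivially, whence no sharpening is needed.
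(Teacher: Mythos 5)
Your proposal follows essentially the same route as the paper: square to get $\|RHR'\|^2=\|RHR'HR\|$, condition on $R$, split on the events $\{\|RH\|\ge u\}$ and $\{\|RH\|_{1\rightarrow 2}\ge v\}$, and apply the Matrix Chernoff inequality three times with exactly the same choices of $B$ and $\mu_{\max}$ (your scalar-Chernoff-plus-union-bound treatment of $\max_j\|w_j\|_2^2$ is equivalent to the paper's NCCI applied to the diagonal matrix $\sum_k \delta_k\,{\rm diag}(H_kH_k^t)$). Your closing remark that the bound is trivial whenever the second or third base exceeds $1$ correctly handles the fact that the proposition's hypotheses omit the factor $e$ that the NCCI's applicability condition requires for those two terms --- a point the paper's own proof leaves implicit.
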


\smallskip

We now have to analyze carefully the various quantities in Proposition \ref{exp_bound} in order to obtain for $P\left(\|RHR^\prime\|\ge r/2 \right)$ a bound of the order $e^{-\alpha \log p}$. 

Set $\alpha'=\alpha+1$ and $r'=r/2$. We tune the parameters so that
\bea
\frac{u^2}{\|X\|^2} & = & \alpha' \log p  \label{u} \\
\frac{v^2}{\mu(X)^2} & = & \alpha' \log p  \label{v} \\
\frac{r'^2}{v^2} & \ge & \alpha' \log p, \label{tv}
\eea
and 
\bea
e \frac{s}{p}\frac{\|X\|^4}{u^2} & \le & e^{-1} \label{u'} \\
e \frac{s}{p}\frac{\|X\|^2}{v^2} & \le & e^{-1} \label{v'} \\
e \frac{s}{p}\frac{u^2}{r'^2} & \le & e^{-1}. \label{ut}
\eea
A crucial quantity turns out to be $\frac{s}{p}\|X\|^{2}$. Keeping in mind that the hypothesis on the coherence reads
\bea \label{cmu}
\mu(X) & \le & \frac{C_{\mu}}{\log p},
\eea
it is necessary to impose that $s$ satisfies
\bea \label{sm}
\frac{s}{p} \|X\|^{2} & = & \frac{C_s}{\log p},
\eea
The constants $C_{\mu}$ and $C_{s}$ will be tuned according to several constraints.
The equalities (\ref{u}-\ref{v}) determine the values of $u$ and $v$. It remains to show that the previous inequalities are satisfied for a suitable choice of $C_{\mu}$ and $C_{s}$.

First, substituting (\ref{u}) into (\ref{ut}), we obtain:
\bean
\alpha' \frac{s}{p}\|X\|^2 \log p & \le e^{-2}r'^2.
\eean
Using (\ref{sm}), it follows that
\bea \label{csr}
C_s & \le & \frac{r'^2}{\alpha' e^2}.
\eea
Now, the bound (\ref{u'}) is satisfied if
\bean
\frac{e^2 C_s}{\log p} & \le & \alpha' \log p.
\eean
Based on (\ref{csr}), it suffices to have $\frac{r'^2}{\alpha'^2} \le \log^2 p$, that is
$p\ge e > e^{r'/\alpha'}$.

Second, substituting (\ref{v}) into (\ref{v'}), we obtain:
\bean
e^2 \frac{s}{p}\|X\|^2 & \le & \alpha' \mu(X)^2 \log p.
\eean
Using (\ref{cmu}) and (\ref{sm}), it follows that
\bean \label{cs}
e\sqrt{\frac{C_s}{\alpha'}} & \le & C_\mu.
\eean

Finally, (\ref{v}-\ref{tv}) yields
$r'^2 \ge  \alpha'^2 \mu(X)^2 \log^2 p$.
In view of (\ref{cmu}), it thus suffices to have $r' \ge  \alpha' \ C_\mu$.

To reach the desired conclusion, in order to ensure the six previous constraints, it suffices to choose $C_s$ and $C_\mu$ such that:
\bean
C_\mu  \le  \frac{r'}{1+\alpha} \quad {\rm and} \quad
C_s  \le \min\left( \frac{r'^2}{(1+\alpha) e^2}, (1+\alpha) \frac{C_\mu^2}{e^2} \right).
\eean
This completes the proof of Theorem \ref{main1}.

\section{Proof of the tail-decoupling and the concentration result}

\subsection{Proof of Proposition \ref{condctrl}}

Let us write
\bean
RHR & = & \sum_{i\neq j} \delta_{i}\delta_{j} H_{ij}.
\eean

Let $\{\eta_i\}$ be a sequence of i.i.d. independent Rademacher random variables, mutually independent of 
$\mathcal  D := \{\delta_i, 1\le i\le p\}$.
Following Bourgain and Tzafriri \cite{Bourgain:IsrJM87}, and de la Pe\~{n}a and Gin\'e \cite{Gine:Decoupling99}, we construct an auxiliary random variable:
\bean
Z=Z(\eta,\delta):=\sum_{i\neq j} (1-\eta_{i}\eta_{i}) \delta_{i}\delta_{j} H_{ij}.
\eean
Setting $Y = \sum_{i\neq j} \delta_{i}\delta_{j} H_{ij} \eta_i \eta_j$, we can write
\bea
Z & =& RHR + Y. \label{expand}
\eea

For the sake of completeness, we recall basic arguments from Corollary 3.3.8 p.12 in de la Pe\~{n}a and Gin\'e \cite{Gine:Decoupling99} (applied to (\ref{expand})) to obtain a lower bound for $\bP(\|Z\| \ge \|RHR\|)$. (We henceforth work conditionally on $\mathcal  D$.) 

Hahn-Banach's theorem gives a linear form $x^*$ on $\cal  \R^{p\times p}$ such that
\bea \label{x*}
\bP(\|Z\| \ge \|RHR\| \ |\cal D) & \ge & \bP(x^*(Z) \ge x^*(RHR) \ |\cal D) \nonumber\\
	 & \ge & \bP(x^*(Y) \ge 0\ |\cal D).
\eea
For any centered real random variable $\xi$, one obtains using H\"{o}lder's inequality twice (first with $\pE |\xi|=2\pE \ \xi \1_{\xi>0}$, second with $\pE\ \xi^2=\pE\ \xi^{2/3}\xi^{4/3}$):
\bea\label{xi}
 \bP(\xi \ge 0) \ge  \frac14 \frac{(\pE |\xi|)^2}{\pE\ \xi^2} \ge \frac14 \frac{(\pE \ \xi^2)^2}{\pE\ \xi^4}.
\eea 
Noticing that $x^*(Y)$ is a centered homogeneous real chaos of order $2$, we deduce from (\ref{x*}), (\ref{xi}) and Lemma \ref{chaos},
\bea
\bP \left( \left\| Z\right\| \ge \|RHR\| \  | \mathcal  D \right) &\ge & \frac{1}{4\times 9}=\frac{1}{36}.
\eea
Multiplying both sides by $\1_{\{ \|RHR\|\ge r \}}$ and taking the expectation, one has
\bea \label{324}
\frac{1}{36}\ \bP(\|RHR\|\ge r ) & \le &  \bP \left( \left\|  Z\right\| \ge r \right).
\eea

As from now, we can use similar arguments to \cite[Prop. 2.1]{Tropp:CRAS08}. There is a $\eta^{*}\in\{-1,1\}^{p}$ for which 
\bean
 \bP \left( \left\|  Z\right\| \ge r \right) = \pE\ \pE \left[ \1_{ \{\| Z\| \ge r\} } |(\eta_{i})\right] & \le & \pE\  \1_{ \{\| Z(\eta^*,\delta)\| \ge r\} }=\bP(\| Z(\eta^*,\delta)\| \ge r).
\eean
Hence, setting $T=\{i, \eta^{*}_{i}=1\}$, we can write
\bean
Z(\eta^*,\delta) & = &  2 \sum_{j\in T, \ k\in T^{c}} \delta_{j}\delta_{k} H_{jk} + 2 \sum_{j\in T^c, \ k\in T} \delta_{j}\delta_{k} H_{jk}.
\eean
Since $H$ is hermitian, we have
\bean
\left\| \sum_{j\in T, \ k\in T^{c}} \delta_{j}\delta_{k} H_{jk} + \sum_{j\in T^c, \ k\in T} \delta_{j}\delta_{k} H_{jk}\right\|  =  \left\| \sum_{j\in T, \ k\in T^{c}} \delta_{j}\delta_{k} H_{jk} \right\|.
\eean
Now, let $(\delta_i')$ be an independent copy of $(\delta_i)$. Set $\wdt \delta_i=\delta_i$ if $i\in T$ and $\wdt \delta_i=\delta'_i$ if $i\in T^c$. Since the vectors $(\delta_i)$ and $(\wdt \delta_i)$ have the same law, we then obtain:
\bean
\bP\left(\|Z\|\ge r\right) & \le & \bP\left( 2\ \left\| \sum_{j\in T, \ k\in T^{c}} \delta_{j}\delta'_{k} H_{jk}\right\|\ge r\right).
\eean
Re-introducing the missing entries in $H$ yields
\bean
\bP \left( \left\|  Z\right\| \ge r \right)  & \le &   \bP(\|RHR'\|\ge r/2 ),
\eean
which concludes the proof of the lemma due to (\ref{324}).


\begin{rem} \label{cste}
The previous result can be seen as a special case of Theorem 1 p.224 of the seminal paper \cite{DeLaPena:BAMS94}. Tracing the various constants involved in this theorem, we obtained the inequality
\bea \label{decbad}
\bP\left(\|RHR\|\ge r\right) & \le & 10^3\ \bP\left(\|RHR'\|\ge \frac{r}{18}\right).
\eea
\end{rem}


\subsection{Proof of Proposition \ref{exp_bound}}

We first apply the NCCI to $\|RHR'\|$ by conditioning on $R$.

\begin{lem}
The following bound holds:
\bea
P\left(\|RHR^\prime\|\ge r\right) & \le &  \bP\left(\|RH\| \ge u\right) + \bP\left( \|RH\|_{1\rightarrow 2} \ge v\right) \nonumber\\
& &  +\ p \left(e\frac{s}{p}\ \frac{u^2}{r^2} \right)^{\frac{r^2}{v^2}}.
\eea
\end{lem}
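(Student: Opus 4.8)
The plan is to condition on $R$ and to recognize $\|RHR'\|^2$ as the operator norm of a sum of \emph{independent} positive semi-definite rank-one matrices, so that the NCCI (Theorem \ref{bern}) applies directly. Writing $M=RH$ and letting $a_j$ denote the $j$-th column of $M$, and using that $R'$ is diagonal with $0$--$1$ entries $\delta_j'$ (so $(R')^2=R'$) together with the symmetry of $H$, one has $(MR')(MR')^t=MR'M^t$, whence
\bean
\|RHR'\|^2 \ = \ \left\| M R' M^t \right\| \ = \ \left\| \sum_{j=1}^p \delta_j'\, a_j a_j^t \right\|.
\eean
Conditionally on $R$, the matrices $X_j:=\delta_j'\, a_j a_j^t\in\R^{p\times p}$ are independent and positive semi-definite, with $\|X_j\|\le\max_j\|a_j\|_2^2=\|RH\|_{1\rightarrow2}^2$ and $\|\pE[\sum_j X_j\mid R]\|=\frac{s}{p}\|M\|^2=\frac{s}{p}\|RH\|^2$. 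Thus $\bP(\|RHR'\|\ge r\mid R)=\bP(\|\sum_j X_j\|\ge r^2\mid R)$ is exactly of the form governed by Theorem \ref{bern} with $d=p$.

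The difficulty is that the two Chernoff parameters $B=\|RH\|_{1\rightarrow2}^2$ and $\mu_{\max}=\frac{s}{p}\|RH\|^2$ are themselves random, since they depend on $R$. To decouple this dependence I would isolate the two bad $R$-measurable events $B_1=\{\|RH\|\ge u\}$ and $B_2=\{\|RH\|_{1\rightarrow2}\ge v\}$ via the trivial decomposition $\bP(A)\le \bP(B_1)+\bP(B_2)+\bP(A\cap B_1^c\cap B_2^c)$ applied to $A=\{\|RHR'\|\ge r\}$. The first two terms are precisely the leading terms $\bP(\|RH\|\ge u)$ and $\bP(\|RH\|_{1\rightarrow2}\ge v)$ in the claimed bound, so the whole content lies in the third term.

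On the good event $B_1^c\cap B_2^c$ one has $\mu_{\max}<\frac{s}{p}u^2$ and $B<v^2$, and the applicability condition $r^2\ge e\,\mu_{\max}$ of Theorem \ref{bern} follows from the proposition's hypothesis $\frac{p}{s}\frac{r^2}{e}\ge u^2$, which simultaneously forces the base $b:=e\frac{s}{p}\frac{u^2}{r^2}$ to satisfy $b\le 1$. Since $B_1^c\cap B_2^c$ is $R$-measurable, I would write $\bP(A\cap B_1^c\cap B_2^c)=\pE[\1_{B_1^c\cap B_2^c}\,\bP(A\mid R)]$ and bound the inner conditional probability deterministically on that event. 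Here the main obstacle is the monotonicity step needed to replace the random parameters by their surrogates $\frac{s}{p}u^2$ and $v^2$: since $x\mapsto(ex/r^2)^{r^2/B}$ is increasing, enlarging $\mu_{\max}$ up to $\frac{s}{p}u^2$ only enlarges the Chernoff bound; and because the resulting base $b\le 1$ while $r^2/B\ge r^2/v^2$, enlarging $B$ up to $v^2$ (which lowers the exponent, hence raises a quantity in $(0,1]$) enlarges it further. This gives $\bP(\|RHR'\|\ge r\mid R)\le p\, b^{\,r^2/v^2}=p\big(e\frac{s}{p}\frac{u^2}{r^2}\big)^{r^2/v^2}$ on the good event; integrating over $R$ produces the third term and completes the proof.
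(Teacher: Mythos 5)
Your proof is correct and follows essentially the same route as the paper's: writing $\|RHR'\|^2$ as the norm of $\sum_j \delta_j' a_j a_j^t$ with $a_j$ the columns of $RH$, applying the NCCI conditionally on $R$ with $B=\|RH\|_{1\rightarrow 2}^2$ and $\mu_{\max}=\frac{s}{p}\|RH\|^2$, and using the decomposition $\bP(\mathcal A)\le\bP(\mathcal B)+\bP(\mathcal C)+\pE\left[\1_{\mathcal B^c\cap\mathcal C^c}\,\bP(\mathcal A\mid R)\right]$. In fact your explicit monotonicity argument (base $\le 1$, so enlarging $\mu_{\max}$ to $\frac{s}{p}u^2$ and $B$ to $v^2$ only enlarges the bound) spells out a step the paper leaves implicit.
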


\begin{proof}

We have $\|RHR^\prime\|^{2}=\|RH R^{\prime 2} HR\|$. But ${R^\prime}^2=R^\prime$, so
\bea
\bP\left(\|RHR^\prime\|\ge r\right) & = & P\left(\|RH R^\prime HR\|\ge r^2\right).
\eea
We will first compute the conditional probability
\bea
\bP\left(\|RHR^\prime HR\|\ge r^2\mid R\right) & := & \pE\left[\1_{\{\|RHR^\prime HR\|\ge r^2\}}|\ R\right].
\eea
Let $Z_j$ be the $j^{th}$ column of $RH$, $j\in\{1,\cdots,p\}$. Notice that
\bean
RHR^\prime HR \ = \ \sum_{j=1}^p \delta_j^\prime Z_jZ_j^t \ := \  \sum_{j=1}^p A_j.
\eean

Since $\sum_{j=1}^p Z_jZ_j^t =RH^2R$ and $\|Z_jZ_j^t\|=\|Z_j\|_2^2$ ,
we then obtain
\bea
\|A_j\| & \le & \|RH\|_{1\rightarrow 2}^2\\
\left\| \sum_{j=1}^p \pE A_j \right\|& \le & \frac{s}{p}  \|RH\|^2.
\eea
The NCCI then yields
\bea
\bP\left( \|RHR^\prime HR\|\ge r^2 \mid R \right) &  \le & \ p  \left(e \frac{s}{p}\frac{\|RH\|^2}{r^2} \right)^{r^2/\|RH\|_{1\rightarrow 2}^2},
\eea
provided that
\bea
e \frac{s}{p}\frac{\|RH\|^2}{r^2}  & \le & 1.
\eea
Let us now introduce the events
\bean
\mathcal  A  =  \left\{ \|RHR^\prime HR\|\ge r^2 \right\};\quad 
\mathcal  B = \left\{\|RH\|\ge u \right\}; \quad
\mathcal  C = \left\{\|RH\|_{1\rightarrow 2} \ge v \right\}.
\eean
We have
\bean
\bP(\mathcal  A) & = & \bP(\mathcal  A\mid \mathcal  B\cup \mathcal  C)\bP(\mathcal  B\cup \mathcal  C)+\bP(\mathcal  A\cap \mathcal  B^c\cap \mathcal  C^c)\\
    & \le & \bP(\mathcal  B)+ \bP(\mathcal  C) +\bP(\mathcal  A\cap \mathcal  B^c\cap \mathcal  C^c).
\eean
The identity
$
\bP(\mathcal  A\cap \mathcal  B^c \cap \mathcal  C^c) = \pE \left[\1_{\mathcal  A\cap \mathcal  B^c \cap \mathcal  C^c} \right]
=\pE \left[\bP\left(\mathcal  A \mid R\right) \: \1_{\mathcal  B^c \cap \mathcal  C^c} \right]
$
concludes the lemma.
\hfill
\end{proof}

We now have to control the norm of $\frac{s}{p}RH^2R$, the norm of $RH$ and the column norm of $RH$.
Let us begin with $\|RH\|=\|HR\|$.

\begin{lem}
The following bounds hold:
\bean
P\left( \|HR\| >u \right) &\le & p  \left(e \frac{s}{p}\frac{\|X\|^4}{u^2} \right)^{u^2/\|X\|^2}\\
\bP\left( \|RH\|_{1\rightarrow 2}\ge v\right) &\le  & p  \left(e \frac{s}{p}\frac{\|X\|^2}{v^2} \right)^{v^2/\mu(X)^2},
\eean
provided that $e \frac{s}{p}\frac{\|X\|^4}{u^2}$ and $e \frac{s}{p}\frac{\|X\|^2}{v^2}$ are less than $1$.
\end{lem}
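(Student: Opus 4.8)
The plan is to derive both tail bounds as direct applications of the Matrix Chernoff Inequality (Theorem~\ref{bern}), each time writing the relevant quantity as the operator norm of a sum of independent positive semi-definite matrices built from the selectors $\delta_j$. The two basic estimates $\|H\|_{1\rightarrow2}^2\le\|X\|^2$ and $\|H\|^2\le\|X\|^4$ recalled at the start of Section~\ref{Proofmain1}, together with the vanishing of the diagonal of $H$ (the columns of $X$ are unit vectors, so $H_{jj}=0$ and $|H_{kj}|\le\mu(X)$), will supply the constants $B$ and $\mu_{\max}$ in each case.

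For the first bound I would start from $\|HR\|^2=\|(HR)(HR)^t\|$ and expand $HR=\sum_j\delta_j H_j e_j^t$, where $H_j$ denotes the $j$-th column of $H$. Because the standard basis vectors are orthonormal and $\delta_j^2=\delta_j$, all cross terms collapse and one obtains the clean decomposition $\|HR\|^2=\bigl\|\sum_{j=1}^p\delta_j H_jH_j^t\bigr\|$. The summands $A_j:=\delta_j H_jH_j^t$ are independent, rank-one and positive semi-definite, with $\|A_j\|=\delta_j\|H_j\|_2^2\le\|H\|_{1\rightarrow2}^2\le\|X\|^2$, giving $B=\|X\|^2$; moreover $\sum_j\pE A_j=\tfrac{s}{p}HH^t$, so $\|\pE S_p\|=\tfrac{s}{p}\|H\|^2\le\tfrac{s}{p}\|X\|^4=:\mu_{\max}$. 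Applying Theorem~\ref{bern} with $d=p$ and $r=u^2$ (legitimate as soon as $e\tfrac{s}{p}\tfrac{\|X\|^4}{u^2}\le1$, i.e. $u^2\ge e\mu_{\max}$) and using the inclusion $\{\|HR\|>u\}\subset\{\|S_p\|\ge u^2\}$ yields exactly the first inequality.

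For the second bound I would note that the squared column norms are themselves separable sums of independent scalars: the $j$-th column of $RH$ is $RH_j$, whence $\|RH\|_{1\rightarrow2}^2=\max_j\sum_k\delta_k H_{kj}^2$. A union bound over the $p$ columns reduces matters to bounding, for a fixed $j$, the tail of $\sum_k\delta_k H_{kj}^2$. This is a sum of independent nonnegative $1\times1$ matrices, to which Theorem~\ref{bern} applies with $d=1$, $B=\max_k H_{kj}^2\le\mu(X)^2$ (here the vanishing diagonal $H_{jj}=0$ is precisely what lets $\mu(X)$ appear), and $\mu_{\max}=\tfrac{s}{p}\sum_k H_{kj}^2=\tfrac{s}{p}\|H_j\|_2^2\le\tfrac{s}{p}\|X\|^2$. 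With $r=v^2$ (valid when $e\tfrac{s}{p}\tfrac{\|X\|^2}{v^2}\le1$) this produces the per-column bound $\bigl(e\tfrac{s}{p}\tfrac{\|X\|^2}{v^2}\bigr)^{v^2/\mu(X)^2}$, and the factor $p$ is the cost of the union bound.

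The only genuinely delicate point is the algebraic reduction in the first bound. The seemingly natural route $\|HR\|^2=\|RH^2R\|=\|(RH)(RH)^t\|$ writes the matrix as a sum over \emph{columns of} $RH$, each of which mixes all the selectors $\delta_k$ and hence is \emph{not} independent across the summation index. Choosing the other grouping, $(HR)(HR)^t$, is exactly what makes the $\delta_j$ decouple into a single selector per summand, so that the summands become independent and the hypotheses of the NCCI are met; everything after that is routine bookkeeping of $B$ and $\mu_{\max}$. The second bound is comparatively mechanical once one observes that a column $l_2$-norm is already a sum of independent terms.
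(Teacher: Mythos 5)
Your proof is correct, and for the first bound it is essentially identical to the paper's: the grouping $\|HR\|^2=\|(HR)(HR)^t\|=\|HRH\|$ together with the decomposition $HRH=\sum_{j}\delta_j H_jH_j^t$, the bounds $B=\|X\|^2$ and $\mu_{\max}=\frac{s}{p}\|X\|^4$, and the proviso $u^2\ge e\frac{s}{p}\|X\|^4$ are exactly the paper's steps, including your (correct) remark that this grouping, rather than $RH^2R$, is what makes the summands independent. For the second bound you take a mildly different route. The paper stays matricial: it sets $M=\sum_{k}\delta_k\,\mathrm{diag}(H_kH_k^t)$, uses the symmetry of $H$ and the interchange of summation with the $\mathrm{diag}$ operation to identify $\|RH\|_{1\rightarrow 2}^2=\|M\|$, and applies the NCCI once to this sum of independent diagonal positive semi-definite matrices with $d=p$, $B=\mu(X)^2$ and $\mu_{\max}=\frac{s}{p}\|X\|^2$. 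You instead write $\|RH\|_{1\rightarrow 2}^2=\max_j\sum_k\delta_k H_{kj}^2$ and treat each column separately with the scalar ($d=1$) case of Theorem~\ref{bern}, paying the factor $p$ through a union bound over columns rather than through the dimensional factor $d$ in the matrix inequality. Since the operator norm of a diagonal positive semi-definite matrix is its largest diagonal entry, the two arguments are equivalent and yield the identical bound; yours is slightly more elementary (no $\mathrm{diag}$ bookkeeping, just a scalar Chernoff bound plus a union bound), while the paper's has the stylistic advantage that every tail estimate in the proof is literally an instance of the same matrix inequality. Both arguments use the same essential structural facts: the vanishing diagonal of $H$ (unit-norm columns) to get $B\le\mu(X)^2$, and $\|H_j\|_2^2\le\|H\|_{1\rightarrow 2}^2\le\|X\|^2$ for the mean term, with the needed proviso $v^2\ge e\frac{s}{p}\|X\|^2$ stated in both.
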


\begin{proof}

The steps are of course the same as what we have just done in the proof of Lemma \ref{condctrl}. Notice that
\bean
\bP\left( \|RH\| >u \right) =  \bP\left( \|HR\|^2 >u^2 \right)  =  \bP\left( \|HRH\| >u^2 \right).
\eean
The $j^{th}$ column of $H$ is $H_{j}=X^tX_j-e_j$. Moreover,
\bea
HRH=\sum_{j=1}^p \delta_j H_j H_j^t.
\eea
We have
$\ \|H_j H_j^t \| \ = \  \|H_j \|_2^2 \ \le\   \|H \|_{1\rightarrow 2}^2 \ \le \ \|X\|^2$,
and
\bea
\left\| \sum_{j=1}^p \pE [\delta_j H_j H_j^t] \right\|& \le & \frac{s}{p}  \|H\|^2 \  \le\   \frac{s}{p} \|X\|^4.
\eea
We finally deduce from the NCCI that
\bea
\bP\left(\|HRH\| \ge u^2 \right) &  \le & \ p  \left(e \frac{s}{p}\frac{\|X\|^4}{u^2} \right)^{u^2/\|X\|^2}.
\eea


Let us now control the supremum $\ell_2$-norm of the columns of $RH$. Set
\bea
M & = & \sum_{k=1}^{p} \delta_{k}\ {\rm diag}(H_{k}H^{t}_{k}).
\eea
Notice that
\bean 
\|RH\|^{2}_{1\rightarrow 2} & = \max_{k=1}^p \|(RH)_k\|_2^2= \left\|{\rm diag} \left((RH)^tRH \right) \right\| = & \left\|{\rm diag} \left(H^tRH \right) \right\|.
\eean
Thus,  
\bean
\|RH\|^{2}_{1\rightarrow 2} & = & \left\|{\rm diag} \left(\sum_{k=1}^p \delta_k (H^t)_kH_k^t \right) \right\|. 
\eean
Using symmetry of $H$ and interchanging the summation and the "diag" operation, 
we obtain that $\|RH\|^{2}_{1\rightarrow 2} = \|M\|$. Moreover, we have for all $k\in \{1,\cdots,p\}$,
\bea
\| {\rm diag}(H_{k}H^{t}_{k})\| & = \max_{j=1}^p (X_jX_k)^2  \le & \mu(X)^{2},
\eea
and
\bean
\| \pE M \|  = \frac{s}{p}  \| {\rm diag}(HH^{t})\|^2   = \frac{s}{p} \|H\|^{2}_{1\rightarrow 2} \le \frac{s}{p} \|X\|^2.
\eean
Applying the NCCI completes the lemma.
\end{proof}


\subsection*{Ackowledgment} The authors thank the referee for valuable comments that improved the paper. They thank Max H\"ugel for pointing out a mistake in a constant involved in a previous version of the tail-decoupling inequality.

\bibliographystyle{amsplain}
\bibliography{database}

\end{document}